\newtheorem{definition}{Definition}
\newtheorem{theorem}{Theorem}
\newtheorem{example}{Example}
\newtheorem{lemma}{Lemma}
\newtheorem{corollary}{Corollary}
\title{New universal partizan rulesets and a new universal dicotic partisan ruleset}
\author{Koki Suetsugu}
\begin{document}

\maketitle
\begin{abstract}
Universal partizan ruleset is a ruleset in which every game value of partizan games can be appear as a position. So far, {\sc generalized konane} and {\sc turning tiles} have been proved to be universal partizan rulesets.
In this paper, we introduce two rulesets {\sc go on lattice} and {\sc beyond the door} and prove that they are universal partizan rulesets by using game tree preserving reduction. Further, we consider dicotic version of {\sc beyond the door}, and we prove the ruleset is a universal partisan dicotic ruleset.

\end{abstract}

\section{Introduction}
Combinatorial game theory (CGT) studies two-player perfect information games with no chance moves. We say a game is under {\em normal play} convention if the player who moves last is the winner and a game is {\em partizan} game if the options for both players can be different in some positions.
Here, we introduce some definitions and theorems of CGT for later discussion. For more details of CGT, see \cite{LIP, CGT}.

In this theory, the two players are called Left and Right.
Since the term ``game'' is polysemous, we refer to each position as a {\it game}. The description of what moves are allowed for a given position is called the {\it ruleset}.

A game is defined by Left and Right options recursively.
\begin{definition} \mbox{}
\begin{itemize}
\item $\{ \mid \}$ is a game, which is called $0$.
\item For games $G^L_1, G^L_2, \ldots, G^L_n, G^R_1, G^R_2, \ldots,$ and $ G^R_m,$ $G =  \{ G^L_1, G^L_2, \ldots, G^L_n \mid G^R_1, G^R_2, \ldots, G^R_m\}$ is also a game. $G^L_1, G^L_2, \ldots, G^L_n$ are called left options of $G$ and $G^R_1, G^R_2, \ldots, G^R_m$ are called right options of $G$.
\end{itemize}
Let $\mathbb{G}$ be the set of all games.
\end{definition}

In terms of the player who has a winning strategy, 
$\mathbb{G}$ is separated into four sets. Let $\mathcal{L}$, $\mathcal{R}$, $\mathcal{N}$, and $\mathcal{P}$ be the set of positions in which $\mathcal{L}$eft, $\mathcal{R}$ight, the $\mathcal{N}$ext player, and the $\mathcal{P}$revious player have winning strategies, respectively. 

The sets are called {\em outcomes} of the games. Every position belongs to exactly one of the four outcomes. For a game $G,$ let $o(G)$ be the outcome of $G$.
We define the partial order of outcomes as $\mathcal{L} > \mathcal{P} > \mathcal{R}, \mathcal{L} > \mathcal{N} > \mathcal{R}.$

The {\it disjunctive sum} of games is an important concept in Combinatorial Game Theory. For games $G$ and $H$, a position in which a player makes a move for one or the other on their turn is called a disjunctive sum of $G$ and $H$, or $G + H$. More precisely, it is as follows:

\begin{definition}
If the game trees of $G$ and $H$ are isomorphic, then we say these games are isomorphic or $G \cong H.$
\end{definition}

\begin{definition}
For games $G \cong \{G^L_1, G^L_2 \ldots G^L_n \mid G^R_1, G^R_2, \ldots, G^R_{m}\}$ and $H \cong \{ H^L_1, H^L_2 \ldots, H^L_{n'} \mid H^R_1, H^R_2, \ldots, H^R_{m'}\},
G+H \cong \{G + H^L_1, G+H^L_2, \ldots, G+H^L_{n'}, G^L_1 + H, G^L_2+H, \ldots, G^L_n + H \mid G+H^R_1, G+H^R_2, \ldots, G+H^R_{m'}, G^R_1+H, G^R_2+H, \ldots, G^R_m + H \}.$
\end{definition}

We also define equality, inequality and negative of games.
\begin{definition}
If for any $X, o(G + X)$ is the same as $o(H + X),$ then we say $G = H$. 
\end{definition}

\begin{definition}
If $o(G + H) \geq o(H + X)$ holds for any $X,$ then we say $G \geq H.$
On the other hand, if $o(G + H) \leq o(H + X)$ holds for any $X,$ then we say $G \leq H.$
We also say $G \gtrless H$ if $ G \not \geq H$ and $G \not \leq H.$
\end{definition}

\begin{definition}
For a game $G \cong  \{G^L_1, G^L_2, \ldots, G^L_n \mid G^R_1, G^R_2, \ldots, G^R_m\},$ let $-G \cong  \{ -G^R_1, -G^R_2, -G^R_m \mid -G^L_1, -G^L_2, \ldots, -G^L_n\}.$

$G + (-H)$ is denoted by $G - H$. 
\end{definition}

It is known that $(\mathbb{G}, +, =)$ is an abelian group and $(\mathbb{G}, \geq, =)$ is a partial order. 




The question arises here, will there be a ruleset in which for any game there is a position equal to the game? If the games appearing in each ruleset are restricted, then perhaps we should think in a narrower framework.
In fact, however, it is known that for every game, a position equal to the game appears in some rulesets.

\subsection{Universal partizan ruleset}
\begin{definition}
A ruleset is {\em universal partizan ruleset} if every value in $\mathbb{G}$ is equal to a position of the ruleset.
\end{definition}
Early results showed that {\sc generalized konane} and {\sc turning tiles} are universal partizan ruleset (\cite{CS, Sue}).
In this study, we will use the latter ruleset.
\begin{definition}
The ruleset of {\sc turning tiles} is as follows:
\begin{itemize}
    \item Square tiles are laid out. The front side is red or blue, and the back side is black. 
    \item Some pieces are on tiles.
    \item Each player (Left, whose color is bLue and Right, whose color is Red), in his/her turn, take a piece and move the piece straight on the tiles of his/her color.
    \item Tiles on which the piece pass over are turned over.
    \item The player who moves last is the winner.    
\end{itemize}
\end{definition}

\begin{center}
\begin{figure}[tb]
\includegraphics[height=6cm]{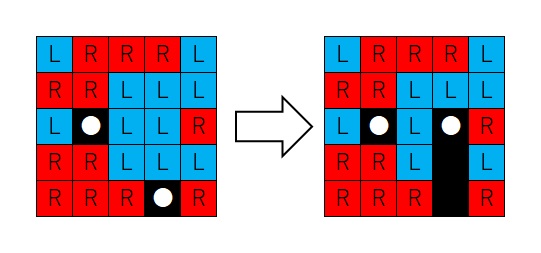}
\caption{Position and a move  in {\sc blue-red turning tiles}}
\label{fig_moveTT}
\end{figure}
\end{center}

Figure \ref{fig_moveTT} shows a position and a move by Left in {\sc blue-red turning tiles}.
Here, ``L'' means the tile is colored blue and ``R'' means the tile is colored red. The characters also correspond to the player who can move pieces on the tile. 

{\sc Turning tiles} is proved to be universal partizan ruleset even if the number of pieces is restricted to be only one.

To distinguish this ruleset from the ruleset defined below, we will also refer to it as {\sc blue-red turning tiles}.

For games that use two colors, red and blue, corresponding to two players, we often consider a variant that adds green, which can be used by both players.
For example, in {\sc blue-red-green hackenbush}, Left can remove blue or green edges and Right can remove red or green edges.  
From this point of view, we consider a varant of {\sc turning tiles}.

\begin{definition}
The ruleset of {\sc blue-red-green turning tiles} is as follows:
\begin{itemize}
    \item Square tiles are laid out. The front side is red, blue, {\em or green}, and the back side is black. 
    \item Some pieces are on tiles.
    \item Each player (Left, whose color is bLue and Right, whose color is Red), in his/her turn, take a piece and move the piece straight on the tiles of his/her color {\em or of  green}.
    \item Tiles on which the piece pass over are turned over.
    \item The player who moves last is the winner.    
\end{itemize}
\end{definition}

\begin{figure}[htb]
    \centering    \includegraphics[keepaspectratio, width = 12cm]{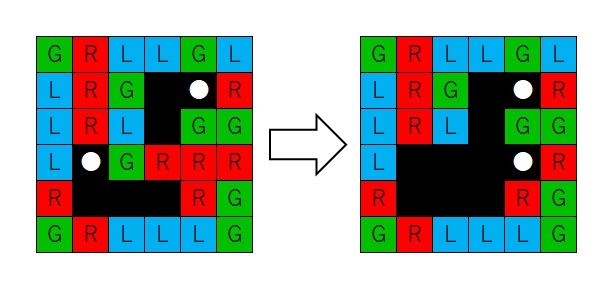}
\caption{Position and a move in {\sc blue-red-green turning tiles}}
    \label{fig:positions}
\end{figure}

Figure \ref{fig:positions} is an example of a position and a move by Right in {\sc blue-red-green turning tiles}.

Obviously, {\sc blue-red-green turning tiles} is also a universal partizan ruleset because every position in {\sc blue-red turning tiles} can be appear in {\sc blue-red-green turning tiles}.

As we have seen here, if two rulesets have an inclusion relation in terms of the sets of  positions, it can be used for proving universality of the rulesets.

\begin{theorem}
Let $\Gamma$ and $\Delta$ be rulesets and assume that $\Gamma$ be a universal partizan ruleset.
If for every position $g \in \Gamma$, there is at least one position in $\Delta$ whose game value is the same as $g$, then $\Delta$ is also a universal partizan ruleset.
\end{theorem}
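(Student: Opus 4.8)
The plan is to chain the two hypotheses directly from the definition of a universal partizan ruleset, with essentially no computation. First I would fix an arbitrary target value $v \in \mathbb{G}$; the goal is to produce a position of $\Delta$ whose game value equals $v$.

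Since $\Gamma$ is assumed to be a universal partizan ruleset, by definition there is a position $g \in \Gamma$ with game value $g = v$. Next I apply the hypothesis relating $\Gamma$ and $\Delta$ to this particular $g$: there is at least one position $d \in \Delta$ whose game value is the same as that of $g$. Combining the two equalities of values gives $d = g = v$, so $d$ is a position of $\Delta$ realizing $v$. As $v$ was arbitrary, every value of $\mathbb{G}$ occurs as a position of $\Delta$, which is precisely the assertion that $\Delta$ is a universal partizan ruleset.

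I do not expect a real obstacle: the statement is an immediate consequence of transitivity of equality of game values together with the definition, and the proof is two lines once $v$ is fixed. The only point to handle carefully is the distinction the paper draws between a \emph{position} (a configuration in a ruleset) and a \emph{game value} (an element of $\mathbb{G}$): universality of $\Gamma$ supplies a position rather than a value, so one must pass to its value before invoking the hypothesis on $\Delta$. The genuine work of the paper is not in this lemma but in verifying its hypothesis for the specific rulesets {\sc go on lattice} and {\sc beyond the door} — that is, in exhibiting, for each {\sc turning tiles} position, a position of the new ruleset with the same game value, via the game-tree-preserving reductions announced in the introduction.
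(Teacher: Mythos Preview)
Your proof is correct and is essentially the same argument the paper gives: the paper's own proof simply states ``This is trivial from the definition of universal partizan ruleset,'' and your two-line chaining of the definition with transitivity of equality of game values is exactly that triviality spelled out.
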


\begin{proof}
This is trivial from the definition of universal partizan ruleset.
\end{proof}

\begin{corollary}
\label{col1}
Let $\Gamma$ and $\Delta$ be rulesets and assume that $\Gamma$ be a universal partizan ruleset.
If for every position $g \in \Gamma$, there is at least one position in $\Delta$ whose game tree is the same as $g$, then $\Delta$ is also a universal partizan ruleset.
\end{corollary}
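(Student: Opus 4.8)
The plan is to obtain this as an immediate consequence of the preceding Theorem, once we record the standard fact that games with isomorphic game trees have the same game value. So the first step is to prove the auxiliary claim: if $G \cong H$, then $G = H$. One clean way to see this is the ``Tweedledum--Tweedledee'' argument applied to $G - H$: by induction on the depth of the game trees, whenever a player moves in one component of $G - H$ the isomorphism between the game trees of $G$ and $H$ furnishes a mirroring move in the other component, returning to a position of the form $G' - G'$ (with $G' \cong G'$ via the restricted isomorphism), which by the induction hypothesis lies in $\mathcal{P}$; hence the second player wins $G - H$, so $G - H = 0$ and $G = H$. Alternatively, one notes directly from the recursive definition of $o(\cdot)$ on disjunctive sums that replacing a summand by an isomorphic game never changes any outcome $o(G + X)$, which is exactly $G = H$ by Definition of equality.

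With this in hand, the corollary is a one-line deduction. By hypothesis, for every position $g \in \Gamma$ there is a position $h \in \Delta$ whose game tree is the same as that of $g$, i.e.\ $h \cong g$; by the auxiliary claim $h = g$ as game values. Hence for every $g \in \Gamma$ there is a position of $\Delta$ whose game value equals $g$, which is precisely the hypothesis of the Theorem. Applying the Theorem, $\Delta$ is a universal partizan ruleset.

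I do not expect a genuine obstacle: ``having the same game tree'' is strictly stronger than ``having the same game value,'' so the corollary is essentially a restatement of the Theorem in the form most convenient for the game-tree-preserving reductions used later in the paper. The only point needing any care is the auxiliary claim $G \cong H \Rightarrow G = H$, and even that is a routine structural induction; in the write-up I would either cite it as folklore (e.g.\ from \cite{LIP, CGT}) or include the short inductive mirroring argument sketched above.
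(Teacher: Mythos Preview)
Your proposal is correct and matches the paper's (implicit) approach: the paper states the corollary with no proof at all, treating it as immediate from the preceding theorem via the standard fact that isomorphic game trees yield equal game values. Your write-up merely makes explicit the folklore step $G \cong H \Rightarrow G = H$ that the paper takes for granted.
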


If a ruleset is proved to be universal partizan ruleset by using Corollary \ref{col1}, we say that it is proved by game tree preserving reduction.

\subsection{Dicotic positions and dicotic rulesets}

We also consider {\em dicotic} (or, {\em all-small}) rulesets.  
A dicotic position is a position such that Left has a dicotic option if and only if Right has a dicotic option. That is, during the play, each player has at least one option except for the terminal position.
For example, $0 = \{ \mid \}, * = \{ 0 \mid 0\},\uparrow = \{ 0 \mid *\} = \{ 0 \mid \{ 0 \mid 0\} \},$ and $\downarrow = -\uparrow$ are dicotic positions but $1 = \{0 \mid \}$ is not a dicotic position because Right has no option while Left has an option $0.$

Let $n \cdot G = \overbrace{G + \cdots + G}^{n\text{ times}}$.

\begin{definition}
A position $G$ is an {\em infinitesimal} if for any positive integer $n$, $n \cdot G < 1$ holds.
\end{definition}

The reason why dicotic positions are sometimes called all-small positions is following well-known result:
\begin{theorem}
Every dicotic position $G$ is an infinitesimal.
\end{theorem}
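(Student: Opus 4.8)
The plan is to isolate one clean claim and prove it by induction on the game tree. \textbf{Step 1 (reduction).} First I would check that the disjunctive sum of two dicotic positions is again dicotic: by the recursive definition every option of $G+H$ has one of the forms $G^L+H$, $G+H^L$, $G^R+H$, $G+H^R$, and is dicotic by induction, while $G+H$ has a Left option exactly when $G$ or $H$ has one, which --- since $G$ and $H$ are dicotic --- is exactly when it also has a Right option. Iterating, $n\cdot G$ is dicotic whenever $G$ is. Hence it suffices to prove the single statement that \emph{every dicotic position $H$ satisfies $H<1$}; applying it to $H=n\cdot G$ yields $n\cdot G<1$ for every positive integer $n$, which is exactly the definition of infinitesimal.

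\textbf{Step 2 (the key claim, by induction on the game tree of $H$).} Using translation invariance of the order together with the standard fact that $X<0$ iff $o(X)=\mathcal{R}$, the claim $H<1$ is equivalent to $o(H-1)=\mathcal{R}$, i.e.\ to Right winning $H-1$ both moving first and moving second. The base case $H\cong 0$ is immediate, since then $H-1=-1$ and $o(-1)=\mathcal{R}$. For the inductive step take $H\not\cong 0$; because $H$ is dicotic it then has at least one Left option and at least one Right option, each dicotic and with a strictly smaller game tree, hence each $<1$ by the induction hypothesis.

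\textbf{Step 3 (playing out $H-1$).} If Left moves first, the summand $-1=\{\mid 0\}$ offers Left no move, so Left must play $H\to H^L$, reaching $H^L-1$; since $H^L<1$ we get $o(H^L-1)=\mathcal{R}$, so Right, to move there, wins --- and this holds for every available $H^L$. If Right moves first, Right plays $H\to H^R$ (available because $H\not\cong 0$ is dicotic), reaching $H^R-1$ with $H^R<1$, so $o(H^R-1)=\mathcal{R}$ and Left, to move there, loses. Hence $o(H-1)=\mathcal{R}$, i.e.\ $H<1$, which closes the induction and, with Step 1, completes the proof.

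\textbf{Where the difficulty lies.} Almost all of the content is in the reduction of Step 1 --- the realization that one should prove $H<1$ for \emph{all} dicotic $H$ rather than track $n\cdot G$ directly, the key point being that $n\cdot G$ is again dicotic so the one-variable induction already handles it. Once that is in place, Steps 2--3 are routine outcome bookkeeping. One extra remark worth making is that the definition of infinitesimal used here is one-sided ($n\cdot G<1$ only); the companion bound $-1<n\cdot G$ would follow immediately from the fact that $-G$ is dicotic too, but it plays no role in the statement as phrased.
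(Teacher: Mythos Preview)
Your argument is correct. The paper itself does not give a proof of this theorem: it states it as a known fact and, a few lines later, refers the reader to the textbooks \cite{LIP, CGT} for proofs and details. Your write-up is precisely the standard textbook argument --- closure of dicotic positions under disjunctive sum, followed by the induction showing $o(H-1)=\mathcal{R}$ for every dicotic $H$ --- so there is nothing to compare beyond noting that you have supplied what the paper deliberately omits.
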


Among dicotic positions, we remark some results on $\uparrow$ and $*$.
\begin{theorem}
\label{thm:nup}
For any positive integer $n,$ the following equations holds.
\begin{eqnarray}
        n \cdot \uparrow &=& \{ 0 \mid (n-1) \cdot \uparrow +* \}, \nonumber \\
        n \cdot \uparrow+* &=& 
        \left \{
        \begin{array}{cc}
             \{ 0 \mid (n-1) \cdot \uparrow\}& (\text{If }n>1.)  \\
             \{0,*\mid 0\}&(\text{If }n=1.)
        \end{array} \right . \nonumber \\
        \uparrow &\mid \mid& *, \nonumber \\
        n \cdot \uparrow & > & *(\text{If }n > 1.) \nonumber 
    \end{eqnarray}
    
\end{theorem}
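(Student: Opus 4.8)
These four equalities are standard facts about $\uparrow$ and $*$, and the natural route is induction on $n$, proving the first two identities \emph{simultaneously} (each is used in the inductive step of the other) and then reading off the last two. For the base case $n=1$ the first identity is merely the definition $\uparrow=\{0\mid *\}$ rewritten as $\{0\mid 0\cdot\uparrow+*\}$. For the second I would compute $\uparrow+*=\{0\mid *\}+\{0\mid 0\}$ directly: its Left options are $*$ and $\uparrow$, its Right options are $\uparrow$ and $*+*=0$; the Right option $\uparrow$ is dominated by $0$ because $\uparrow>0$, and the Left option $\uparrow$ is reversible through its Right option $*$ (one needs $*\le\uparrow+*$, equivalently $0\le\uparrow$), so it is replaced by the single Left option $0$ of $*$. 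What remains is $\uparrow+*=\{0,*\mid 0\}$, and this form is canonical.

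For the inductive step ($n\ge 2$) I would expand $n\cdot\uparrow=\uparrow+(n-1)\cdot\uparrow$ and $n\cdot\uparrow+*=\uparrow+(n-1)\cdot\uparrow+*$, substitute the inductive hypothesis at the lower indices (both the shape $\uparrow=\{0\mid *\}$ and the two identities), list the options of these disjunctive sums, merge the coincidences created by $*+*=0$, and then prune by domination and reversibility until the claimed forms appear. The reversibility tests boil down to inequalities of the form $*\le k\cdot\uparrow$ (equivalently $k\cdot\uparrow+*\ge 0$) for $k\ge 2$ and the positivity $k\cdot\uparrow>0$ for $k\ge 1$; these I would settle by a short strategy argument in $k\cdot\uparrow+*$ with Right to move — the move $*\to 0$ leaves $k\cdot\uparrow>0$, and every move inside $k\cdot\uparrow$ leads, via the Right option $(k-1)\cdot\uparrow+*$, to a position $\ge 0$ — or fold them into the same induction. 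One can instead bypass canonical forms entirely and prove each identity $G=H$ by exhibiting a second-player winning strategy in the difference game $G-H$.

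The last two claims then follow quickly. Since $-*=*$, the $n=1$ instance of the second identity gives $\uparrow-*=\uparrow+*=\{0,*\mid 0\}$; whoever moves first can move to $0$ and win, so this game has outcome $\mathcal{N}$, i.e.\ it is fuzzy with $0$, which is exactly the assertion that $\uparrow$ and $*$ are incomparable. For $n>1$ the second identity gives $n\cdot\uparrow-*=n\cdot\uparrow+*=\{0\mid (n-1)\cdot\uparrow\}$ with $(n-1)\cdot\uparrow>0$: Right's only move loses, and Left, moving first, moves to $0$ and wins, so the game lies in $\mathcal{L}$, that is, $n\cdot\uparrow>*$.

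The main obstacle is the bookkeeping in the inductive step. The second identity has two regimes ($n=1$ versus $n>1$), so the passage from $n-1$ to $n$ must treat $n=2$ slightly apart, and the reductions cascade: a reversed Left option of $n\cdot\uparrow$ can itself introduce a new option (for instance $*$) that then has to be eliminated by a further reversibility step through $0$. Organizing these reductions and verifying each auxiliary inequality precisely at the index where it is invoked is where the genuine work lies; none of the individual checks is hard.
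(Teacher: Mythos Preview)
The paper does not prove this theorem at all: immediately after stating it (and the next theorem) the authors write ``For proofs and details on these theorems, see CGT textbooks like \cite{LIP, CGT}.'' Your sketch is exactly the standard textbook argument---simultaneous induction on $n$ with domination/reversibility reductions for the first two identities, and then reading off the incomparability $\uparrow\parallel *$ and the inequality $n\cdot\uparrow>*$ from the canonical forms---and it is correct as outlined.
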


\begin{theorem}
\label{thmuplim}
Let $G$ be an infinitesimal born by day $n+1,$ that is, the height of $G$'s game tree is less than or equal to $n+1$. Then, $G \leq n \cdot \uparrow$ or $G \leq n \cdot \uparrow + *$.
\end{theorem}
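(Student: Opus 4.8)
The plan is to prove the statement by induction on $n$. The two ingredients are the standard recursive criterion for the order relation --- for games $G$ and $H$, $G \ge H$ holds if and only if no Right option $G^R$ satisfies $G^R \le H$ and no Left option $H^L$ satisfies $H^L \ge G$ (see \cite{LIP, CGT}) --- together with the explicit forms of $n\cdot\uparrow$ and $n\cdot\uparrow + *$ supplied by Theorem \ref{thm:nup}. For the base case $n=0$: the infinitesimals born by day $1$ are $0$ and $*$ (any negative integers also meet the stated definition of infinitesimal, but they are $\le 0$), and $0 \le 0 = 0\cdot\uparrow$ and $* \le * = 0\cdot\uparrow + *$.

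For the inductive step I would fix an infinitesimal $G$ born by day $n+1$, with the statement assumed for all smaller values of $n$ (it then applies to options of smaller birthday as well, since $n\cdot\uparrow$ and $n\cdot\uparrow + *$ increase with $n$). The argument uses that the canonical options of $G$ are infinitesimals born by day $n$; for dicotic $G$ this is immediate, since dicoticity passes to canonical options. By the inductive hypothesis every Left option satisfies $G^L \le (n-1)\cdot\uparrow$ or $G^L \le (n-1)\cdot\uparrow + *$, and combining either bound with $G^L \ge n\cdot\uparrow$ (respectively $G^L \ge n\cdot\uparrow + *$) would force $\uparrow \le 0$ or $\uparrow + * \le 0$, both false since $\uparrow > 0$ and $\uparrow + * \gtrless 0$. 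Hence no Left option of $G$ is $\ge n\cdot\uparrow$ or $\ge n\cdot\uparrow + *$. By Theorem \ref{thm:nup} the unique Right option of $n\cdot\uparrow$ is $(n-1)\cdot\uparrow + *$ and the unique Right option of $n\cdot\uparrow + *$ is $(n-1)\cdot\uparrow$; feeding these and the previous observation into the order criterion yields that $G \not\le n\cdot\uparrow$ is equivalent to $(n-1)\cdot\uparrow + * \le G$, and $G \not\le n\cdot\uparrow + *$ is equivalent to $(n-1)\cdot\uparrow \le G$.

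It then remains to rule out that $G \ge (n-1)\cdot\uparrow$ and $G \ge (n-1)\cdot\uparrow + *$ hold simultaneously. Applying the order criterion to each of these lower bounds, every Right option $G^R$ would satisfy $G^R \not\le (n-1)\cdot\uparrow$ and $G^R \not\le (n-1)\cdot\uparrow + *$, contradicting the inductive hypothesis applied to $G^R$ --- unless $G$ has no Right option at all. In that remaining case $G$ is, in canonical form, a non-negative integer, hence $G = 0$ by infinitesimality, and then $G \ge (n-1)\cdot\uparrow$ and $G \ge (n-1)\cdot\uparrow + *$ cannot both hold for any $n \ge 1$. This closes the induction.

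Conceptually the argument is short; the obstacles I anticipate are two bookkeeping matters. First, the option computations must be checked carefully against Theorem \ref{thm:nup} in the low cases $n=1$, where $n\cdot\uparrow + * = \{0,*\mid 0\}$ carries the extra Left option $*$, and $n=2$, along with the small comparisons ($\uparrow > 0$, $\uparrow + * \gtrless 0$, $n\cdot\uparrow > *$) invoked along the way. Second --- and this is where phrasing the result for all infinitesimals rather than only for dicotic games actually costs something --- one must justify that the canonical options of an infinitesimal are themselves infinitesimal, so that the inductive hypothesis may be applied to $G^L$ and $G^R$; for dicotic $G$ this needs nothing, and in general it holds because any option that is not infinitesimal turns out to be dominated or reversible and so disappears from the canonical form.
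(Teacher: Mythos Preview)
The paper does not itself prove Theorem~\ref{thmuplim}: immediately after stating it, the text reads ``For proofs and details on these theorems, see CGT textbooks like \cite{LIP, CGT}.'' Your induction via the recursive order criterion together with the explicit option structure of $n\cdot\uparrow$ and $n\cdot\uparrow+*$ from Theorem~\ref{thm:nup} is exactly the argument found in those references, so there is nothing in the paper to compare against beyond that citation. The one caution worth recording is the point you already raise yourself: establishing that the canonical options of a general infinitesimal are again infinitesimal is genuine extra work, but since the paper only ever invokes the theorem through Corollary~\ref{cor:up} (i.e.\ for dicotic $G$), the clean dicotic version of your argument---where options are automatically dicotic and hence infinitesimal---is all that is actually needed here.
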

For proofs and details on these theorems, see  CGT textbooks like \cite{LIP, CGT}.
Since $2 \cdot \uparrow > *,$ The statement of Theorem \ref{thmuplim} can be rephrased as $G < (n + 2) \cdot \uparrow$ {\em and} $G < (n + 2) \cdot \uparrow + *$. Thus, the following corollary holds:   
\begin{corollary}
\label{cor:up}
    For any dicotic position $G$, there is an integer $N$ and for any $n \geq N,~G < n \cdot \uparrow, G < n \cdot \uparrow + *, G > n\cdot \downarrow,$ and $G > n\cdot \downarrow +*$ holds.
\end{corollary}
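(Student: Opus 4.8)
The plan is to reduce the corollary to Theorem \ref{thmuplim} together with the facts about $\uparrow$ and $*$ collected in Theorem \ref{thm:nup}, and then to propagate a bound obtained for one value of $n$ to all larger $n$ by monotonicity of the map $n \mapsto n\cdot\uparrow$.

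First I would fix a dicotic position $G$ and let $d$ be the height of its game tree. If $d = 0$ then $G = 0$ and every claimed inequality is immediate, so assume $d \geq 1$. Since every dicotic position is an infinitesimal, $G$ is an infinitesimal born by day $d = (d-1)+1$, so Theorem \ref{thmuplim} applies with parameter $n = d-1$ and yields $G \leq (d-1)\cdot\uparrow$ or $G \leq (d-1)\cdot\uparrow + *$.

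Next I would upgrade this to the two strict inequalities $G < (d+1)\cdot\uparrow$ and $G < (d+1)\cdot\uparrow + *$, exactly as in the paragraph preceding the corollary: in either case one notes $(d+1)\cdot\uparrow - (d-1)\cdot\uparrow = 2\cdot\uparrow$ and uses $2\cdot\uparrow > 0$ together with $2\cdot\uparrow > *$ (equivalently $2\cdot\uparrow + * > 0$, since $-* = *$), both from Theorem \ref{thm:nup}. Put $N = d+1$. For any $n \geq N$ we have $n\cdot\uparrow = N\cdot\uparrow + (n-N)\cdot\uparrow \geq N\cdot\uparrow$ because $\uparrow > 0$, hence $G < N\cdot\uparrow \leq n\cdot\uparrow$, and likewise $G < N\cdot\uparrow + * \leq n\cdot\uparrow + *$. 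This gives the first two inequalities of the corollary.

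Finally, for the $\downarrow$ statements I would apply what has just been proved to $-G$, which is again a dicotic position, obtaining a threshold $N'$ with $-G < n\cdot\uparrow$ and $-G < n\cdot\uparrow + *$ for all $n \geq N'$; negating these and using $-\uparrow = \downarrow$ and $-* = *$ gives $G > n\cdot\downarrow$ and $G > n\cdot\downarrow + *$. Taking the maximum of the two thresholds produces a single $N$ that works for all four inequalities. I do not expect a real obstacle here: the only points needing care are that Theorem \ref{thmuplim} delivers a disjunction (handled uniformly by passing from $(d-1)\cdot\uparrow$ to $(d+1)\cdot\uparrow$), and that the passage to ``all $n \geq N$'' relies on the monotonicity $n\cdot\uparrow \leq (n+1)\cdot\uparrow$, which is immediate from $\uparrow > 0$.
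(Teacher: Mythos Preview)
Your argument is correct and follows essentially the same route as the paper: the paper's justification is the sentence immediately preceding the corollary, which uses $2\cdot\uparrow > *$ (together with $2\cdot\uparrow > 0$) to turn the disjunction in Theorem~\ref{thmuplim} into the conjunction $G < (n+2)\cdot\uparrow$ and $G < (n+2)\cdot\uparrow + *$. You simply spell out more of the details that the paper leaves implicit --- the monotonicity step $n\mapsto n\cdot\uparrow$ and the passage to the $\downarrow$ inequalities via $-G$ --- but the underlying mechanism is identical.
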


We will use this corollary in Section \ref{secDR}.

A dicotic ruleset is a ruleset in which every position is a dicotic position and a universal partisan dicotic ruleset is a dicotic ruleset in which every partisan dicotic value can appear as a position. In \cite{LN23}, to find a universal partisan dicotic ruleset is introdiced as an open problem. In this paper, we solve it.

In the next section, we introduce two rulesets {\sc go on lattice} and {\sc beyond the door}, and prove that they are universal partizan ruleset by game tree preserving reduction. 
In Section~\ref{secDR}, we consider dicotic version of {\sc beyond the door} and prove that the ruleset is a universal partisan dicotic ruleset.
The final section presents the conclusions.


\section{New universal partizan rulesets}
\subsection{{\sc Go on lattice}}
\begin{definition}
The rule of {\sc go on lattice} is as follows:
    \begin{itemize}
        \item There is a lattice graph. There are pieces on some nodes. The edges are colored red, blue, or dotted.
        \item A player, in his/her turn, chooses a piece and moves it straight on edges colored his/her color.
        \item After a piece passed a node, every piece cannot get on or pass the node.
        \item If a player moves a piece to a node adjacent to a dotted edge, then the edge changes to solid edge colored by the opponent's color.
        \item The player who moves last is the winner.
    \end{itemize}
\end{definition}

\begin{figure}[htb]
    \centering
    \includegraphics[width = 12cm]{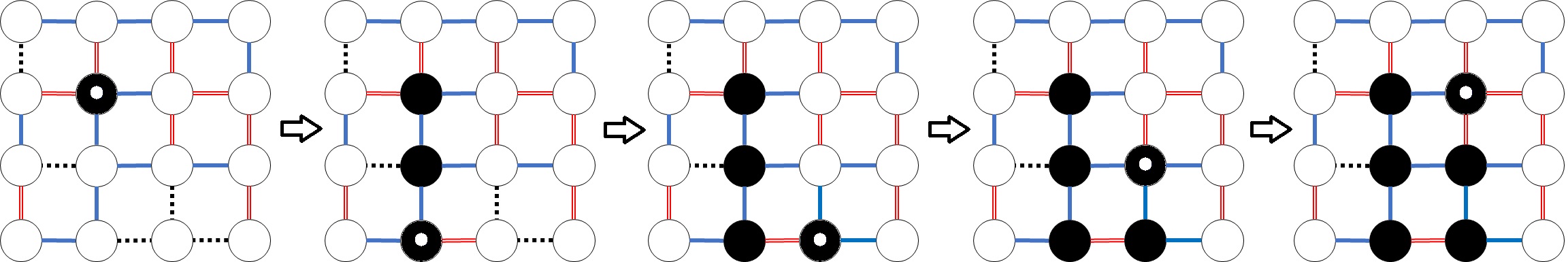}
    \caption{Play of {\sc go on lattice}}
    \label{fig:posgol}
\end{figure}
Figure \ref{fig:posgol} is a play of {\sc go on lattice}. We use double line for red edges for monochrome printing.
\begin{theorem}
    {\sc Go on lattice} is a universal partizan ruleset.
\end{theorem}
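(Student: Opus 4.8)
The plan is to prove universality of {\sc go on lattice} by game tree preserving reduction from {\sc blue-red turning tiles} (with a single piece), invoking Corollary~\ref{col1}. Concretely, I would show that for every position $g$ of single-piece {\sc blue-red turning tiles} there is a position of {\sc go on lattice} whose game tree is isomorphic to that of $g$. The natural idea is to simulate each tile of the {\sc turning tiles} board by a small gadget of lattice nodes and coloured edges, so that a straight move of the piece across $k$ tiles of one colour corresponds to a straight move of the {\sc go on lattice} piece across the corresponding $k$ gadgets, and the ``turning over'' of tiles (which forbids the piece from ever re-using them) is emulated by the rule that once a node has been passed, no piece may get on or pass it again.

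The key steps, in order, would be: (1) Fix a single-piece {\sc blue-red turning tiles} position and describe the encoding — place the lattice so that the rows and columns of the tile board become straight lines in the lattice, representing a blue (resp.\ red) tile by an edge (or short chain of edges) coloured blue (resp.\ red), and using the dotted edges and/or extra ``padding'' nodes to make sure that after the piece stops it cannot continue further than intended and that the black (turned-over) tiles become unusable. (2) Verify that a legal Left move in {\sc turning tiles} — sliding the piece in a straight line over blue tiles — is in bijection with a legal Left move in {\sc go on lattice}, and symmetrically for Right; in particular check that the ``straight-line'' constraint and the destination options match exactly. (3) Verify that the side effects coincide: tiles passed over in {\sc turning tiles} become forbidden forever, and the corresponding lattice nodes, having been passed, become forbidden forever, so no spurious extra options appear and no legitimate option disappears. (4) Conclude by induction on game-tree height that the two game trees are isomorphic, then apply Corollary~\ref{col1}.

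The main obstacle I anticipate is step (2)–(3): making the gadget faithful. In {\sc turning tiles} the piece may stop on \emph{any} tile along its straight path and the remaining (not-yet-passed) tiles of that line stay available for later moves, whereas in {\sc go on lattice} passing a node kills it permanently and the colour-flip triggered by the dotted-edge rule could create unwanted options for the opponent. I would need the gadget to (a) let the moving player choose any intermediate stopping node, (b) not destroy nodes the piece merely ``could have'' used but didn't, and (c) arrange the dotted-to-solid colour changes so they never open up a move that has no counterpart in {\sc turning tiles}. A clean way to handle (c) is to route the piece so that after a move it sits on a node all of whose incident edges are either already-used (dead) nodes, edges of the opponent's colour leading only into the continuation of the same line (which is exactly the {\sc turning tiles} behaviour), or dotted edges positioned so that their activation is harmless. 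Once the gadget is pinned down, the inductive isomorphism argument is routine, and the theorem follows immediately from Corollary~\ref{col1}.
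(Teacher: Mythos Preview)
Your high-level strategy --- a game tree preserving reduction from single-piece {\sc blue-red turning tiles} and an appeal to Corollary~\ref{col1} --- is exactly the route the paper takes. So at the level of ``which approach,'' you are on target.

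The gap is that you have written a plan, not a proof: no gadget is actually specified, and your list of anticipated obstacles (multi-node gadgets per tile, padding nodes, worrying that dotted-edge flips might create spurious options) suggests you are looking for something more elaborate than is needed. The paper's construction is far simpler: one lattice node per tile, arranged identically; for adjacent tiles $A,B$ with corresponding nodes $A',B'$, the edge $A'B'$ is solid of that colour if $A$ and $B$ share a colour, solid of the other tile's colour if a piece sits on $A$ or $B$, and dotted otherwise. That is the whole encoding. The point you are missing is that the dotted-edge rule is not a nuisance to be neutralised but precisely the mechanism that makes the reduction work: when the piece arrives at a node bordering a tile of the opposite colour, the incident dotted edge flips to the \emph{opponent's} colour, which is exactly the option the opponent has in {\sc turning tiles} (moving the piece off its now-black tile onto the adjacent tile of their colour). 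Your concern~(b) is also a non-issue: nodes beyond the stopping point are never passed, so they remain alive, just as un-passed tiles remain face-up. With the one-node-per-tile map in hand, the move-by-move bijection is immediate and no induction on game-tree height is needed beyond the obvious observation that corresponding moves lead to corresponding positions.
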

\begin{proof}
Let $f$ be a function from a position in {\sc turning tiles} to a position in {\sc go on lattice} as follows:

Let $G$ be a position in {\sc turning tiles}. In  $f(G)$ there are as many nodes as tiles in $G$. The tiles in $G$ and the nodes in $f(G)$ are arranged exactly the same.
For each piece on a tile in $G$, there is a corresponding piece on the node corresponds to the tile.
For any adjacent tiles $A$ and $B$ in $G$, let $A'$ and $B'$ are corresponding nodes in $f(G)$.
If the color of $A$, and $B$ are the same, then edge between $A'$ and $B'$ is solid and the same color as $A$ and $B$. 
If there is a piece on $A$ or $B$, then the edge between $A'$ and $B'$ is solid and the color is the same as the other tile.
Finally, if the color of $A$ and $B$ are different and no piece is on each tile, then the edge between $A'$ and $B'$ is dotted line.

\begin{figure}[htb]
    \centering
    \includegraphics[width = 10cm]{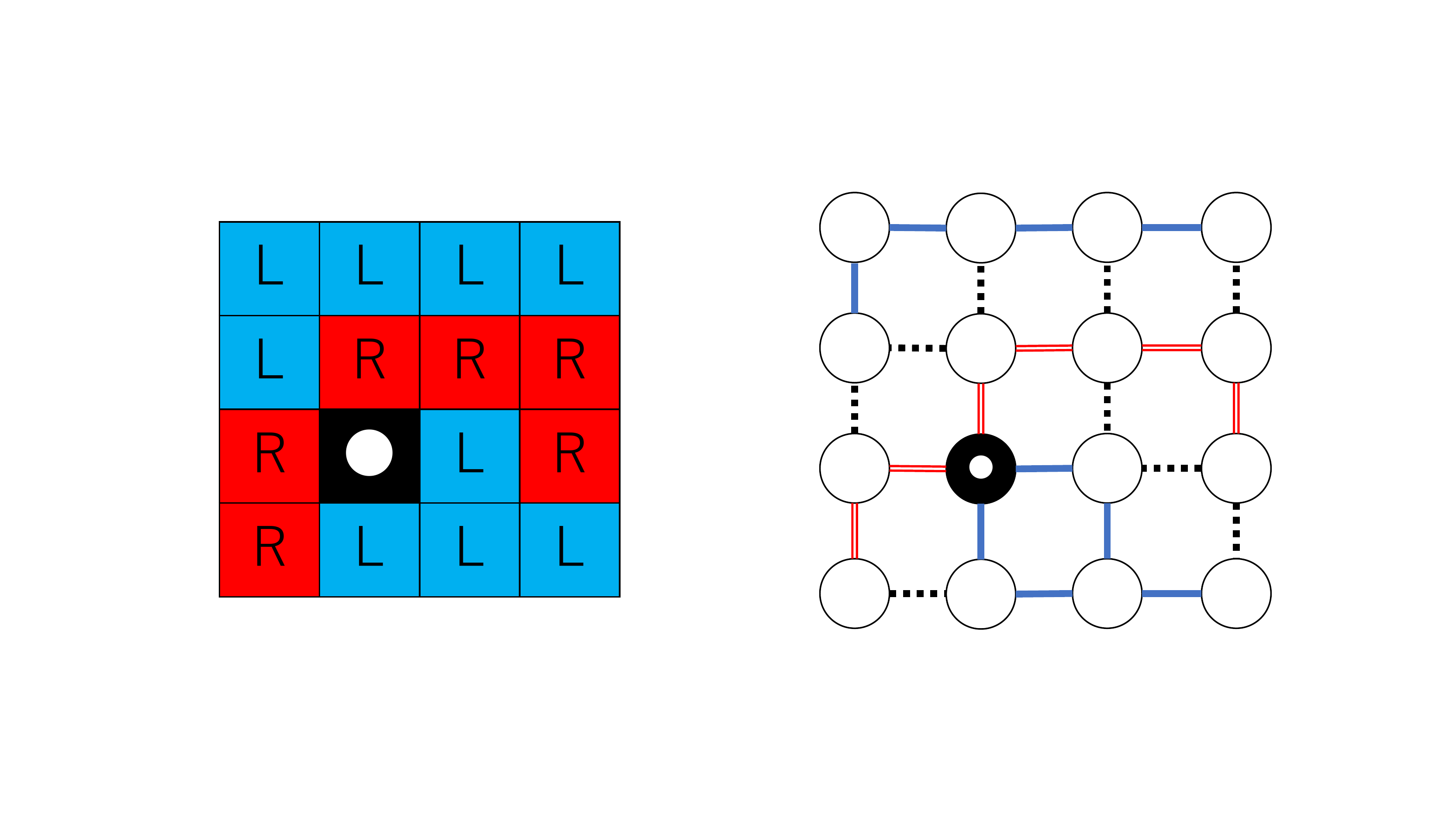}
    \caption{Corresponding positions in {\sc blue-red turning tiles} and {\sc go on lattice}}.
    \label{fig:cor_gol}
\end{figure}

Figure \ref{fig:cor_gol} shows this corresponding.
Here, the game tree of $G$ and $f(G)$ are isomorphic.
We prove that every move in one game has a corresponding move in the other game.
Assume that in $G$ Left can move a piece on tile $A_0$ to tile $A_n$ through tiles $A_1, A_2, \ldots, A_{n-1}$. Then, $A_1, A_2, \ldots, A_n$ are blue tiles. Let $A_0', A_1',  \ldots, A_n'$ be the corresponding nodes in $f(G)$. 
Let $(A', B')$ be the edge between $A'$ and $B'$.
Then, from the definition of $f$, all of $(A'_1, A'_2), (A'_2, A'_3), \ldots, (A'_{n-1}, A'_n)$ are blue edge.
In addition, if $A_0$ was a blue tile before turning, then $(A'_0, A'_1)$ is a blue edge, and if $A_0$ was a red tile, $(A'_0, A'_1)$ had been a dot edge and after Right moved the piece to $A'_0,$ it changed to a blue edge.
Therefore for both case, $(A'_0, A'_1)$ is a blue edge and Left can move a piece
from $A'_0$  to $A'_n$ through $A'_1, A'_2, \ldots, A'_{n-1}$.
Conversely, assume that in $f(G)$, Left can move a piece from $A'_0$ to $A'_n$ through $A'_1, A'_2, \ldots, A'_{n-1}$.
Then, in $G$, all corresponding tiles $A_1, A_2, \ldots, A_n$ are blue tiles. Therefore, Left can move a piece from $A_0$ to $A_n$ through $A_1, A_2, \ldots, A_{n-1}$ in the corresponding position in {\sc turning tiles}.
Similar proof holds for Right's moves.

Thus, from Corollary \ref{col1}, {\sc go on lattice} is a universal partizan ruleset.
\end{proof}
\subsection{{\sc Beyond the door}}
\begin{definition}
The rule of {\sc beyond the door} is as follows: 
\begin{itemize}
\item 
Square rooms are arranged in a grid pattern. There are doors between the rooms. The front and back of the doors are painted red, blue, or black. There are pieces in several rooms.
\item 
A player, in his/her turn, chooses a piece and moves it in a straight line. When a piece moves beyond the door, the color of the piece's side of the door must be the player's color.
\item After a piece passed a room, every piece can not enter the room.
\item The player who moves last is the winner.
\end{itemize}
\end{definition}

For monochrome printing, we use directed graph with two types of arrowheads. See Figure \ref{fig:pos_door}.
Each node corresponds to each room and each edge corresponds to each door.
If Left (bLue) can move a piece from a room to an adjacent room, we use filled arrowhead in the corresponding graph. Similarly, if Right (Red) can move a piece from a room to an adjacent room, we use open arrowhead in the corresponding graph. We omit the corresponding edge if both side of the door is colored black.

\begin{figure}
    \centering
    \includegraphics[width = 12cm]{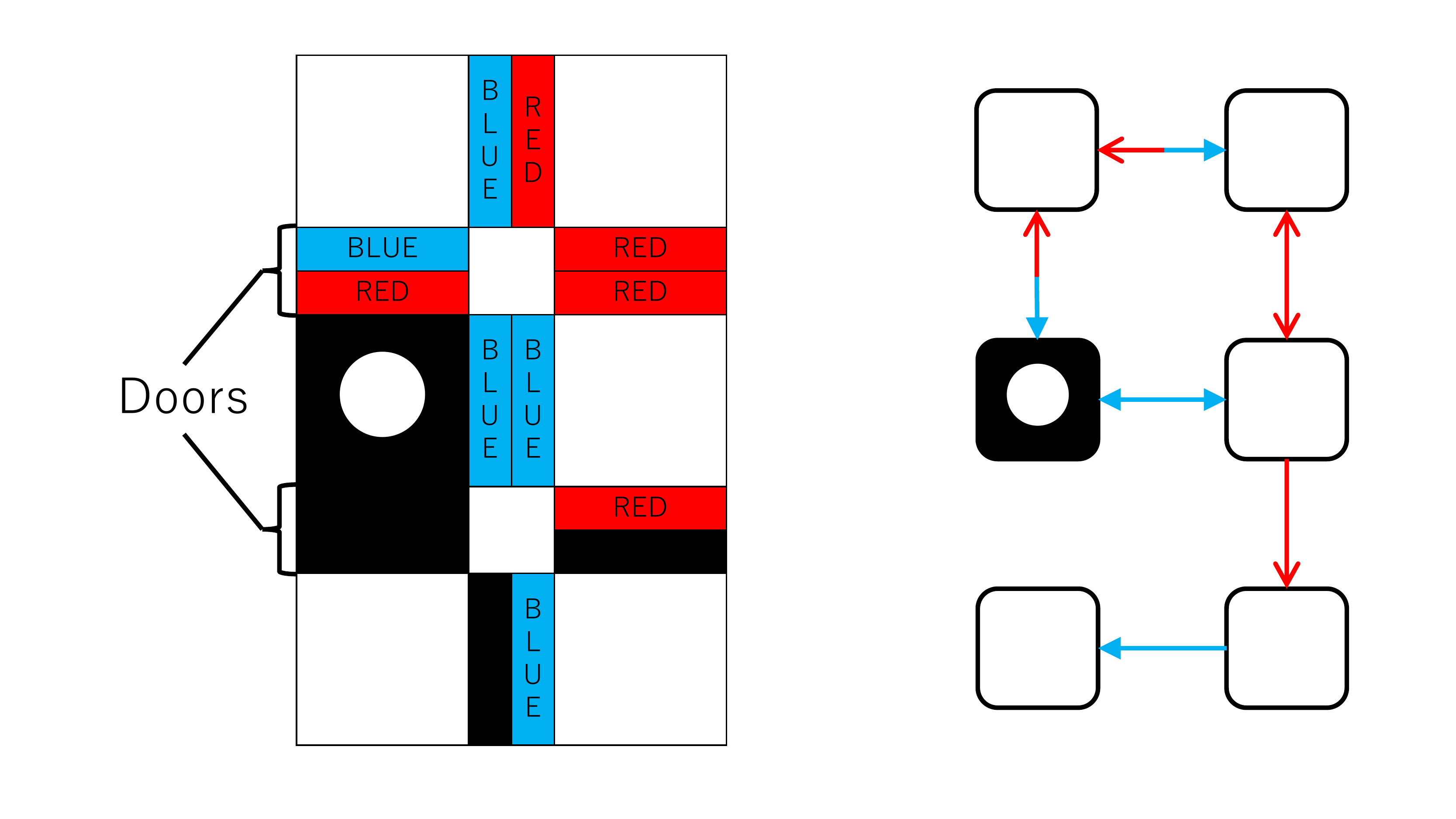}
    \caption{Position in {\sc beyond the door} and its representation by directed graph with two types of arrowheads}
    \label{fig:pos_door}
\end{figure}

\begin{theorem}
    {\sc Beyond the door} is a universal partizan ruleset.
\end{theorem}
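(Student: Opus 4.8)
The plan is to mimic the proof for \textsc{go on lattice}: exhibit an explicit game tree preserving reduction from \textsc{blue-red turning tiles} (which is known to be universal, even with a single piece) into \textsc{beyond the door}, and then invoke Corollary \ref{col1}. Given a \textsc{turning tiles} position $G$, I would build a \textsc{beyond the door} position $f(G)$ with one room per tile, laid out in the same grid, and one piece in the room corresponding to each tile carrying a piece. The key design decision is how to color the two sides of each door between adjacent rooms $A'$ and $B'$ so that the ``turning'' behaviour of tiles is faithfully simulated by the ``which side faces you'' behaviour of doors.

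The encoding I would use is: the side of the door facing out of room $A'$ is colored with the color of tile $A$ (blue if $A$ is a Left/blue tile, red if $A$ is Right/red), and symmetrically the side facing out of $B'$ gets the color of tile $B$; if a tile is currently face-down (black), the corresponding door side is black. Then a piece sitting in room $A'$ can step into $B'$ exactly when the $B'$-facing side of the door — wait, I must be careful about orientation: the rule says ``the color of the piece's side of the door must be the player's color,'' so a piece leaving $A'$ toward $B'$ checks the side of the door on the $A'$ side, which in my encoding is the color of tile $A$. Since in \textsc{turning tiles} a piece on tile $A_0$ moves along a line $A_0, A_1, \dots, A_n$ iff each of $A_1, \dots, A_n$ has the mover's color (the starting tile's color is irrelevant because it turns), I need the door-side check to read the color of the destination-side tile; so the cleaner choice is: the side of the door facing \emph{into} room $B'$ (i.e.\ the side a piece sees as it enters $B'$ from $A'$) is colored with the color of tile $B$. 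Concretely, for the door between $A'$ and $B'$, color its $A'$-facing side with the color of $B$ and its $B'$-facing side with the color of $A$. Then a piece in $A'$ moving toward $B'$ sees the color of $B$ on its side of the door, matching the \textsc{turning tiles} rule exactly; and after the piece passes through $B'$ the room is closed, matching the ``every piece cannot enter'' clause just as the turned-over tile is impassable. A minor wrinkle: in \textsc{turning tiles} the line-move constraint only forbids \emph{reusing} turned tiles and the starting tile's prior color does not matter — this is automatically respected because the starting room $A_0'$ is simply the piece's current room and no door-side into it is consulted.

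With the encoding fixed, the proof proper has two directions, exactly paralleling the \textsc{go on lattice} argument: (i) any legal Left move in $G$ along $A_0,\dots,A_n$ corresponds to a legal Left move in $f(G)$ along $A_0',\dots,A_n'$, because each door side into $A_1',\dots,A_n'$ is blue precisely when $A_1,\dots,A_n$ are blue tiles, and none of these rooms has yet been traversed; (ii) conversely any legal Left move in $f(G)$ lifts to a legal Left move in $G$ by the same correspondence; and symmetrically for Right. One then checks that applying the move produces positions that are again related by $f$: in $G$ the traversed tiles $A_1,\dots,A_{n-1}$ turn black and $A_n$ becomes closed-off to entry, while in $f(G)$ the traversed rooms become impassable — I would note that closing rooms in \textsc{beyond the door} and flipping tiles to black in \textsc{turning tiles} have the same effect on all future legal moves (both make the cell a dead end that no piece may occupy or cross), so $f$ commutes with moves and hence $G$ and $f(G)$ have isomorphic game trees. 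Applying Corollary \ref{col1} then finishes it.

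The main obstacle I anticipate is getting the orientation of the door colorings exactly right and, relatedly, confirming that \textsc{beyond the door}'s ``a piece cannot enter a traversed room'' behaves identically to \textsc{turning tiles}' ``a turned (black) tile cannot be passed or landed on'' — in particular that the \emph{starting} room/tile is handled consistently in both (the starting tile in \textsc{turning tiles} does turn over, but since a move always begins from the piece's current room this never obstructs a move), and that a black door side in my encoding corresponds correctly to a tile being face-down. A secondary point worth stating carefully is that a single-piece restriction suffices, so I only ever need one piece and need not worry about two pieces interacting; but since the reduction handles multiple pieces uniformly anyway, I would simply carry the general statement. Once these bookkeeping details are pinned down, the argument is essentially a transcription of the \textsc{go on lattice} proof.
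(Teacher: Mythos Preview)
Your proposal is correct and is essentially the same as the paper's proof: the paper defines the identical encoding (the door between $A'$ and $B'$ shows the color of $B$ when viewed from $A'$ and the color of $A$ when viewed from $B'$), verifies the two-way correspondence of straight-line moves, and invokes Corollary~\ref{col1}. Your extra remarks about the starting tile/room and the equivalence of ``turned black tile'' with ``traversed room'' are valid bookkeeping that the paper leaves implicit.
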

\begin{proof}
Let $f'$ be a function from a position in {\sc turning tiles} to a position in {\sc beyond the door} as follows:

Let $G$ be a position in {\sc turning tiles}. In  $f'(G)$ there are as many rooms as tiles in $G$ and the tiles in $G$ and the rooms in $f'(G)$ are arranged exactly the same.
For each piece on a tile in $G$, there is a corresponding piece in the room corresponding to the tile.
For any adjacent tiles $A$ and $B$ in $G$, let $A'$ and $B'$ are corresponding rooms in $f'(G)$.
The color of the door separating $A'$ and $B'$ matches the color of $A$ when viewed from $B'$ and corresponds to the color of $B$ when observed from $A'$.

\begin{figure}[htb]
    \centering
    \includegraphics[width = 12cm]{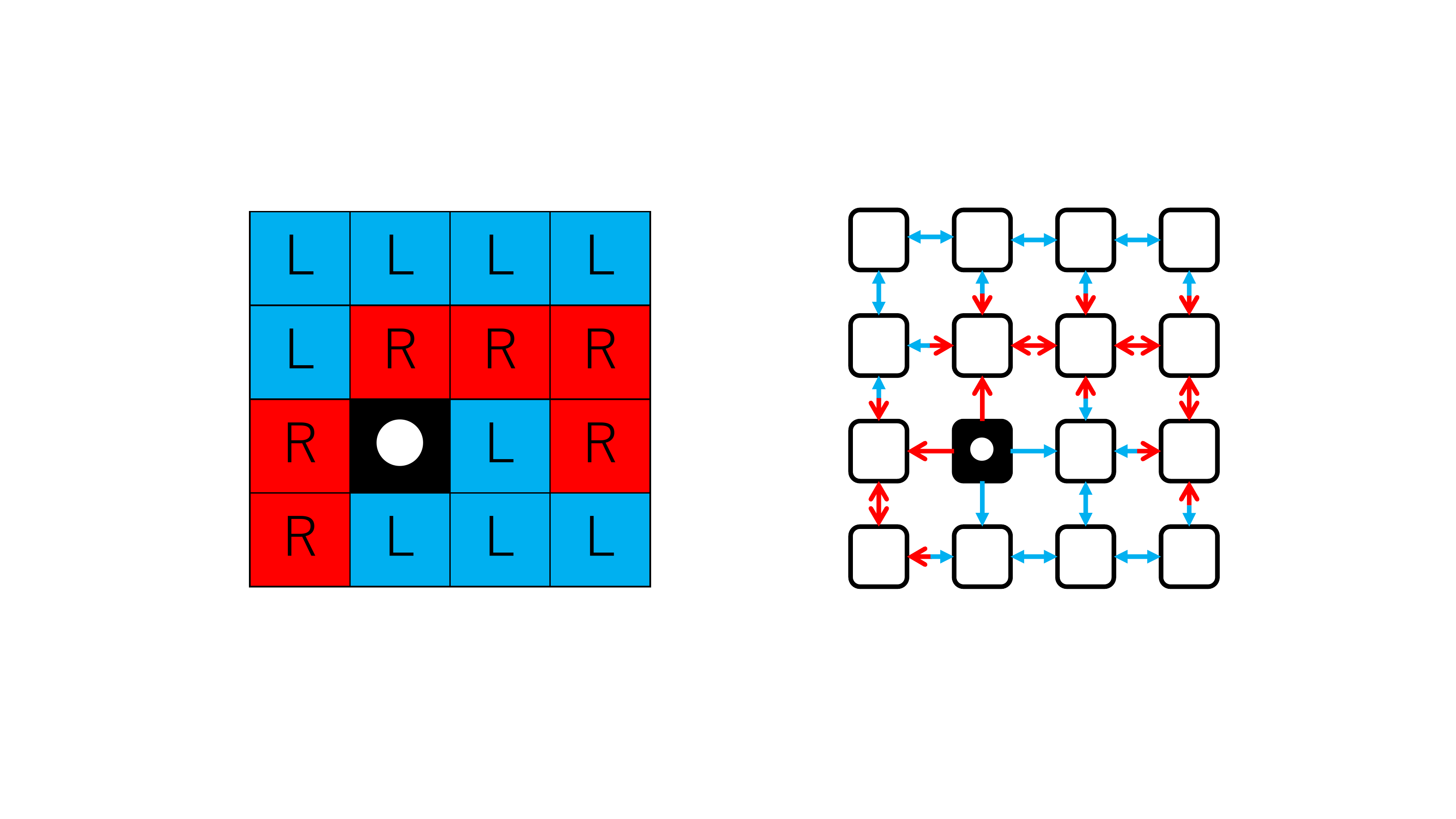}
    \caption{Corresponding positions in {\sc blue-red turning tiles} and {\sc beyond the door}}.
    \label{fig:cor_door}
\end{figure}

Figure \ref{fig:cor_door} shows this corresponding.
Here, the game tree of $G$ and $f'(G)$ are isomorphic.
We prove that every move in one game has a corresponding move in the other game.
Assume that in $G$ Left can move a piece on tile $A_0$ to tile $A_n$ through tiles $A_1, A_2, \ldots, A_{n-1}$. Then, $A_1, A_2, \ldots, A_n$ are blue tiles. Let $A_0', A_1',  \ldots, A_n'$ be the corresponding rooms in $f'(G)$. 
Let $A' \rightarrow B'$ be the color of the door between $A'$ and $B'$ on the $A'$ side.
Then, from the definition of $f'$, all of $A'_0 \rightarrow A'_1, A'_1 \rightarrow A'_2, \ldots, A'_{n-1} \rightarrow A'_n$ are blue.
Therefore, Left can move a piece from $A'_0$  to $A'_n$ through $A'_1, A'_2, \ldots, A'_{n-1}$.
Conversely, assume that in $f(G)$, Left can move a piece from $A'_0$  to $A'_n$ through $A'_1, A'_2, \ldots, A'_{n-1}$.
Then, in $G$, all corresponding tiles $A_1, A_2, \ldots, A_n$ are blue tiles. Therefore, Left can move a piece from $A_0$ to $A_n$ through $A'_1, A'_2, \ldots, A'_{n-1}$ in the corresponding position in {\sc turning tiles}.
Similar proof holds for Right's moves.

    Thus, from Corollary \ref{col1}, {\sc beyond the door} is a universal partizan ruleset.
\end{proof}

If $f$ has an inverse function, {\sc blue-red turning tiles} and {\sc go on lattice} has one-to-one correspondence between the positions of each ruleset. Similar thing can be said for $f'$ and {\sc beyond the door}.
However, $f$ and $f'$ have no inverse functions.
For instance, Fig. \ref{fig:counter} shows positions in {\sc go on lattice} and {\sc beyond the door}. No position in {\sc blue-red turning tiles} is mapped to these positions by $f$ and $f'$  because depending on the order of moves, both Left and Right may move pieces to the same node or the same room in these positions while in {\sc blue-red turning tiles} it never happen that both player can move to a piece on the same tile.

Thus, in a sense, {\sc go on lattice} and {\sc beyond the door} are more complex than {\sc blue-red turning tiles}, despite all of them are belonging to the class of universal partisan rulesets.

Note that the computational complexity of the problem determining the winner from given position is PSPACE-complete for each ruleset. It is proved in \cite{Yos2023}.

\begin{figure}
    \centering
    \includegraphics[width = 12cm]{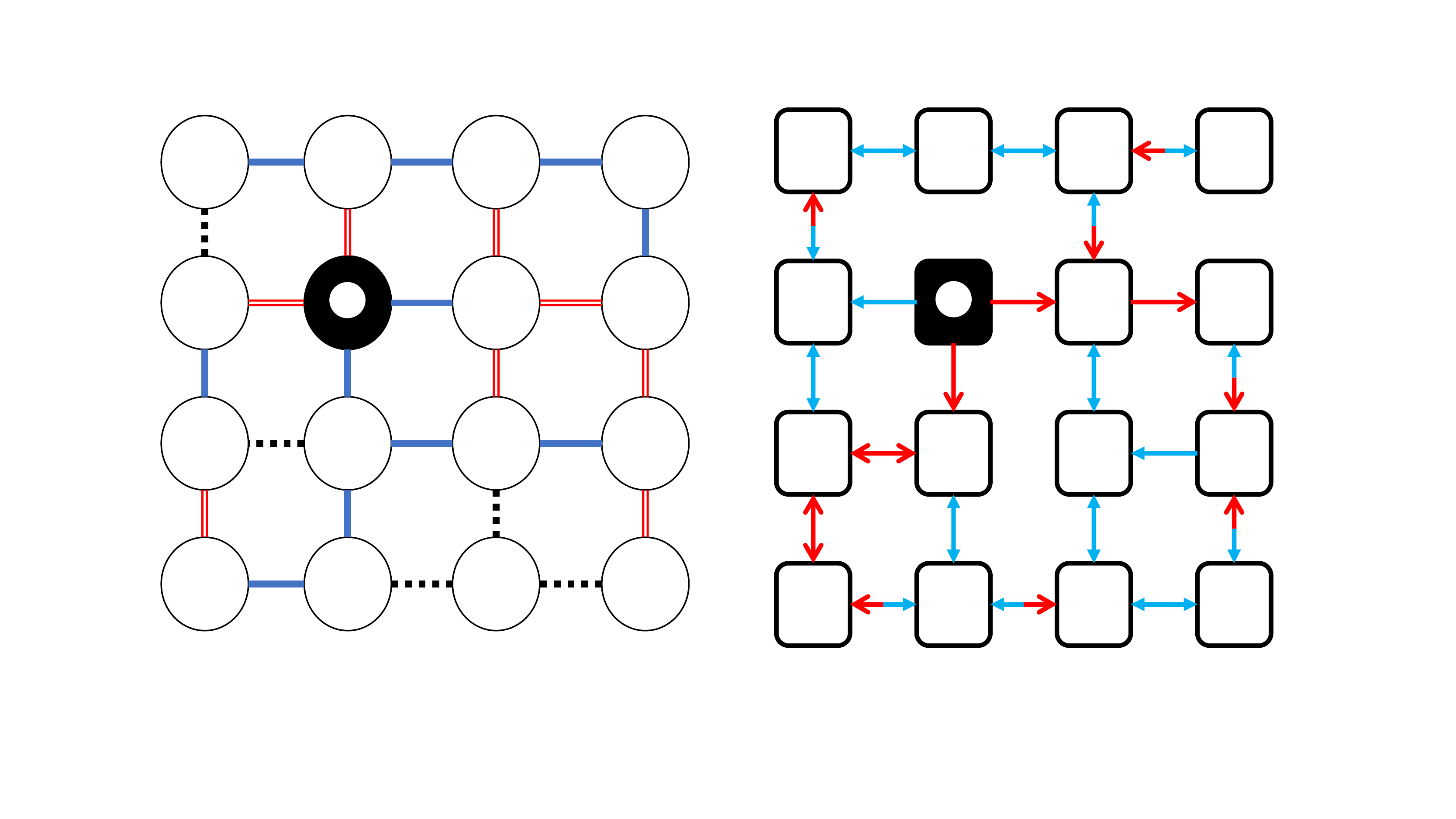}
    \caption{$f$ and $f'$ have no inverse functions.}
    \label{fig:counter}
\end{figure}

\section{Universal partisan dicotic ruleset}
\label{secDR}

In this section, we consider universal partisan dicotic ruleset.
Note that the rulesets discussed above are universal partisan rulesets but not universal partsan dicotic rulesets because in the rulesets positons are not always dicotic positions.

First, we consider {\em dicotic function} $\delta$.
\begin{definition}
Let $G$ be a position. Then $\delta(G)$ is a position such that every empty set in $G$ is replaced with $0$.
\end{definition}

\begin{example}
$$\delta(\{ \mid \}) = \{0 \mid 0\} = *,$$
$$\delta(\{ 0 \mid \}) = \{* \mid 0\} = \downarrow,$$
$$\delta(\{ 0 \mid 0\}) = \{* \mid *\} = 0.$$
\end{example}

Note that even if $A = B,$ sometimes $\delta(A) \neq \delta(B).$ For example, 
consider a position $2 = \{1 \mid \} = \{1 , 0 \mid \}.$ Here, $$\delta(\{1 \mid \}) = \{ \{* \mid 0\} \mid 0\} = \{ \downarrow \mid 0\}$$ and $$\delta(\{1, 0 \mid \}) = \{ \{* \mid 0\}, * \mid 0\} = \{ \downarrow, * \mid 0\}.$$ 
Here, $\{ \downarrow \mid 0\} \neq \{ \downarrow, * \mid 0\}$ because $o(\{ \downarrow \mid 0\} - \{ \downarrow, * \mid 0\} )= \mathcal{R}.$

Next, we introduce dicotic version of above rulesets.
\begin{definition}
The ruleset $\delta$-{\sc turning tiles} is the same as {\sc (blue-red) turning tiles}, except for the following added move:
\begin{itemize}
\item A player can remove a piece if and only if the player cannot move the piece anywhere.
\end{itemize}

Rulesets $\delta$-{\sc go on lattice} and $\delta$-{\sc beyond the door} are defined in a similar way.
\end{definition}

Table \ref{fig:exampledelta} shows how the value changes for positions in {\sc  turning tiles} and $\delta$-{\sc turning tiles}.

\begin{table}[htb]
\begin{tabular}{c}
    \centering
    \includegraphics[width = 12cm]{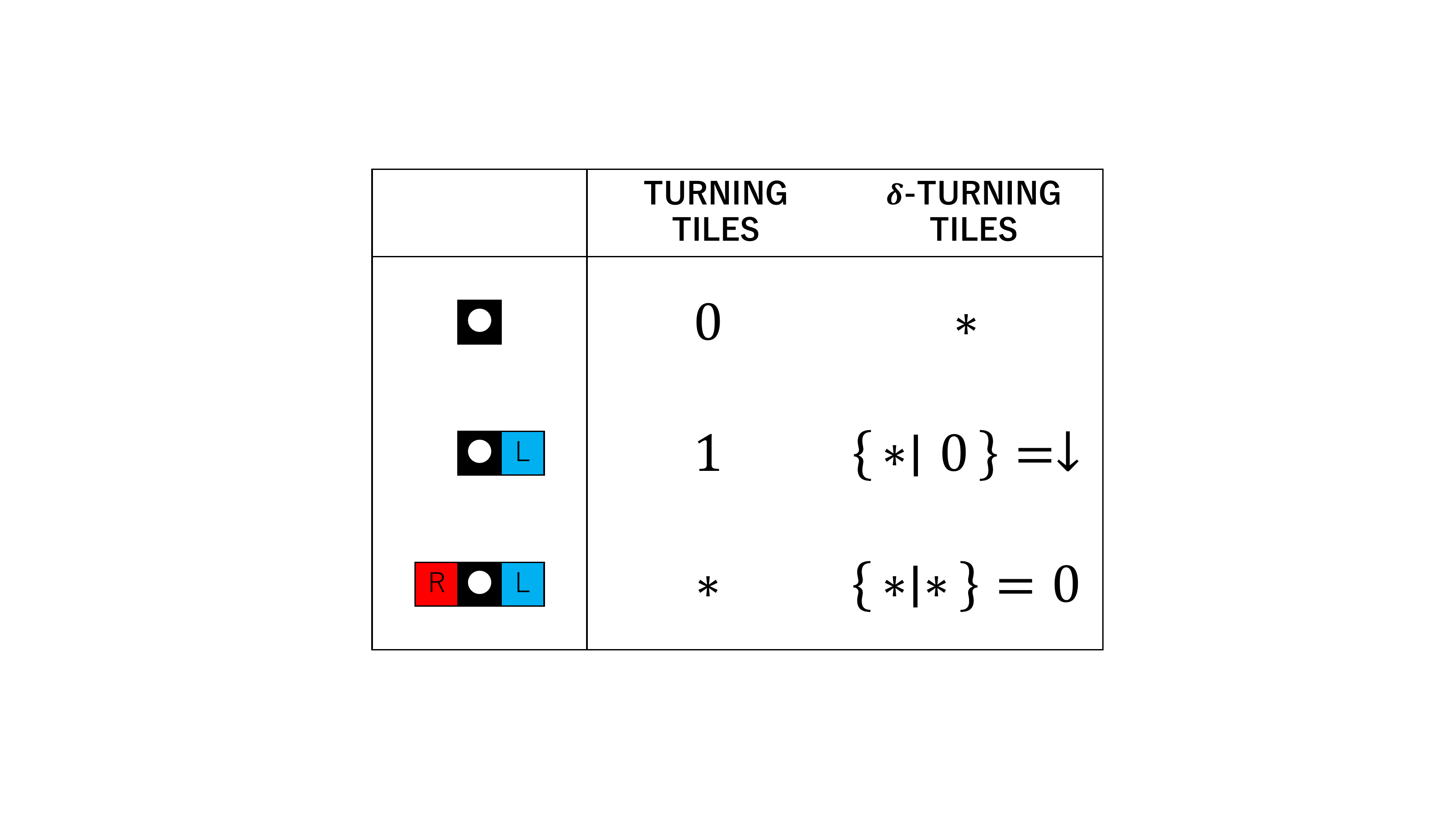}

    \end{tabular}  
    \caption{Positions and values in {\sc turning tiles} and $\delta$-{\sc turning tiles}}
    \label{fig:exampledelta}
\end{table}

Note that we cannot immediately say that these dicotic version of rulesets are universal partisan dicotic rulesets even though the original rulesets are universal partisan rulesets.
Consider a dicotic position $D$. Let $D'$ be a position such that every $0$ in $D$ is replaced with emptysets. Then in a universal partisan ruleset, say in {\sc blue-red turning tiles}, there is a position $X$ such that $X = D'$. However, as mentioned above, even if $X = D'$, it may be $\delta(X) \neq \delta(D')$, so we cannot say $D$ is in $\delta$-{\sc turning tiles.}

On the other hand, by following discussion, $\delta$-{\sc beyond the door} is proved to be a universal partisan dicotic ruleset.

\begin{lemma}
An $(n+1)$-room path where only Left can move, with direction changing at each step  has value $n\cdot \downarrow$ or $n \cdot \downarrow + *$ depends on $n$ is odd or $n$ is even, respectively.
\end{lemma}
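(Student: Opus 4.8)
The plan is to extract the canonical recursive form of the given position directly from the ruleset and then match it against (the negation of) the identities in Theorem~\ref{thm:nup}. Write $P_m$ for the $\delta$-{\sc beyond the door} position consisting of an $m$-room path laid out so that it turns direction at every room, with the single piece in the room at one end and each door coloured so that on their turn Left's only movement is to advance the piece one room. The lemma is then the assertion that $P_{n+1} = n\cdot\downarrow$ when $n$ is odd and $P_{n+1} = n\cdot\downarrow + *$ when $n$ is even.

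First I would determine the options of $P_m$. Because a piece moves only in a straight line and the path changes direction after each room, the sole available movement is pushing the piece one room forward; hence for $m\ge 2$ Left's unique option is $P_{m-1}$, and since Left can move the piece Left is not permitted to remove it. Right can never move this piece, so for every $m\ge 1$ Right's only option is to remove it, leaving the empty position $0$; the ``passed room'' restriction is irrelevant because there is a single piece that only ever advances. For $m=1$ neither player can move the piece, so each may remove it. Therefore $P_1 = \{0\mid 0\} = *$ and $P_m = \{P_{m-1}\mid 0\}$ for all $m\ge 2$.

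Then I would close by induction on $m = n+1$. The base case is $P_1 = * = 0\cdot\downarrow + *$, the $n=0$ (even) instance. For the inductive step, negating the first two identities of Theorem~\ref{thm:nup} gives $n\cdot\downarrow = \{(n-1)\cdot\downarrow + *\mid 0\}$ (valid for $n\ge 1$) and $n\cdot\downarrow + * = \{(n-1)\cdot\downarrow\mid 0\}$ (valid for $n\ge 2$). If $n$ is even, the induction hypothesis gives $P_n = (n-1)\cdot\downarrow$, so $P_{n+1} = \{(n-1)\cdot\downarrow\mid 0\} = n\cdot\downarrow + *$; if $n$ is odd, then $P_n = (n-1)\cdot\downarrow + *$, so $P_{n+1} = \{(n-1)\cdot\downarrow + *\mid 0\} = n\cdot\downarrow$. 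In each case the parity relative to $P_n$ has flipped, which is exactly the alternation claimed.

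The CGT identities themselves are already supplied by Theorem~\ref{thm:nup}, so the only substantive work is the second paragraph: reading the recursion $P_m = \{P_{m-1}\mid 0\}$ off the ruleset. The point needing the most care there is the interplay of the added ``remove a stuck piece'' move with the geometry — one must verify that Left has no legal removal while the piece can still advance, that the zigzag layout genuinely forbids multi-room slides, and that the blocking of passed rooms does nothing for a lone forward-moving piece. Once that is pinned down, the induction is immediate.
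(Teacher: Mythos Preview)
Your proposal is correct and follows essentially the same route as the paper: both argue by induction on the length of the path, identify the base case $P_1=\{0\mid 0\}=*$, read off the recursion $P_{m}=\{P_{m-1}\mid 0\}$ from the zigzag geometry together with the $\delta$-removal rule, and then invoke the negated identities of Theorem~\ref{thm:nup} to close the induction. Your write-up is more explicit than the paper's about why Left cannot slide more than one room and why the removal move is available to Right but not to Left, but the underlying argument is the same.
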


\begin{proof}
We prove this theorem by induction.
If $n = 0$, then there is only one move and both player can only remove the piece, so the value is $\{ 0 \mid 0\} = * = 0\cdot \downarrow + *$.

Assume that $n$ is odd and the statement holds for $n-1$. Then, consider $(n + 1)$ room path. Since the direction changes every step, Left has only one option, which is $n$ room path where the direction changes with every step. From the induction hypothesis, the value is $(n - 1) \cdot \downarrow + *$. Right cannot move the piece anywhere and can only remove the piece.
Therefore, the value of this $(n + 1)$-path is $\{ (n-1) \cdot \downarrow + *\mid 0\} = n \cdot \downarrow$ from Theorem \ref{thm:nup} and $\downarrow = -\uparrow$.

Similarly, if $n$ is even, then the value is $\{(n - 1)\cdot \downarrow \mid 0\} = n \cdot \downarrow + *$.
\end{proof}

Figure \ref{fig:pathdown} shows such positions.
\begin{figure}[htb]
    \centering
    \includegraphics[width = 12cm]{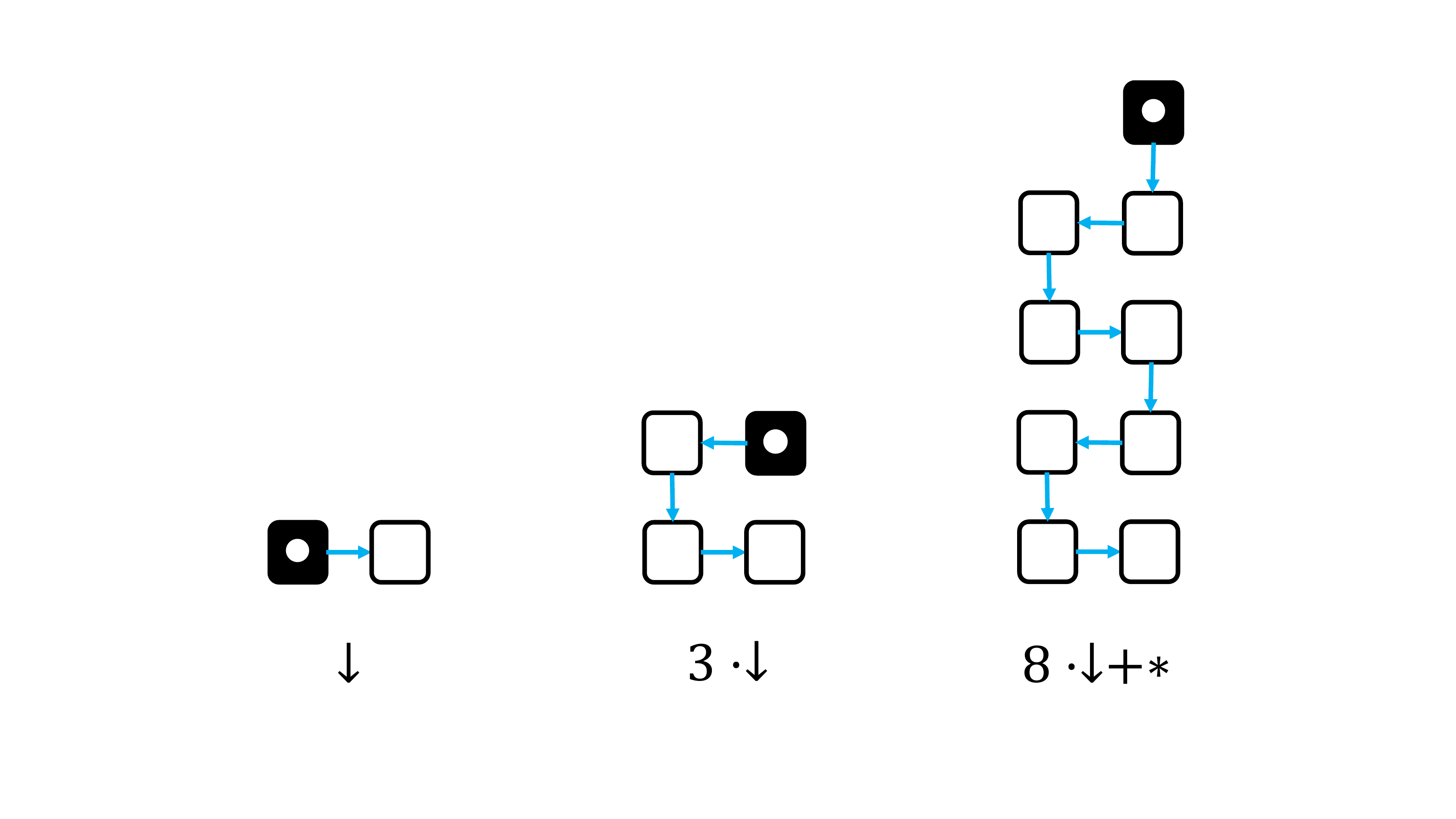}
    \caption{$(n+1)$-room path where only Left can move, with direction changing at each step.}
    \label{fig:pathdown}
\end{figure}

\begin{theorem}
$\delta$-{\sc beyond the door} is a universal partisan dicotic ruleset.
\end{theorem}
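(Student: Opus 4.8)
The plan is to prove universality by giving a game-tree-preserving reduction from {\sc blue-red turning tiles} to $\delta$-{\sc beyond the door}, analogous to the reduction $f'$ already used, but with enough extra structure to realise the dicotic replacement of empty option sets by $0$. The key point is that in $\delta$-{\sc beyond the door} the only new move is ``remove a stuck piece'', which produces exactly the value $0$; so a gadget in which a piece faces a door it cannot cross on its turn contributes a $0$ option for the player in question. Concretely, given a dicotic position $D$, first form $D'$ by replacing each $0$ in $D$ by $\{\mid\}$ (the empty game), so that $D$ itself equals $\delta(D')$ on the nose; then build a {\sc beyond the door} position whose \emph{non-removal} moves realise the game tree of $D'$, exactly as in the proof that {\sc beyond the door} is universal, and arrange that the removal moves available in $\delta$-{\sc beyond the door} are precisely the ones that correspond to the $0$'s that $\delta$ reinserts.

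The main obstacle is that the naive analogue of $f'$ does not do this: when a {\sc turning tiles} piece reaches a terminal tile, in $\delta$-{\sc beyond the door} the corresponding piece can be removed by \emph{either} player (whoever is to move), whereas $\delta$ reinserts a $0$ as an option only for the player whose option set was empty — recall the worked example showing $\delta(\{1\mid\}) \neq \delta(\{1,0\mid\})$, so one must be careful about exactly which $0$'s appear. The fix is to not let pieces reach ``bare'' terminal rooms at all; instead, whenever a branch of $D'$ needs a Left-option $0$ (i.e. a node that is a left option with empty option set, or more generally where $\delta$ should graft a $0$), attach a short path gadget leading out of that room through a door Right cannot cross, so that after Left moves there the piece is stuck and only Left's removal — giving value $0$ — is available. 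Symmetrically for Right. This is where the $(n+1)$-room alternating-direction path lemma is used: it lets us build, for each required infinitesimal ``stopper'' or for a grafted $0$, a self-contained dicotic gadget of known value ($n\cdot\downarrow$, $n\cdot\downarrow+*$, and their negatives) without disturbing the rest of the game tree.

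The steps, in order, would be: (1) fix a dicotic $D$, define $D'$ by replacing every $0$ with $\{\mid\}$, and note $\delta(D') = D$; (2) recall that, as shown earlier, $D'$ occurs as a {\sc blue-red turning tiles} position $X$ with game tree isomorphic to that of $D'$, hence via $f'$ as a {\sc beyond the door} position $f'(X)$ with the same game tree; (3) observe that passing from {\sc beyond the door} to $\delta$-{\sc beyond the door} adds, at each position where a piece cannot move, exactly one move of value $0$ for the player to move — i.e. it performs precisely the operation $\delta$ on the game tree of $f'(X)$, \emph{provided} every empty option set in the tree of $X$ corresponds to a genuinely stuck piece for the right player; (4) verify that $f'(X)$ already has this property: a tile of $X$ that is a terminal position for Left (no blue tile reachable) maps to a room from which the piece cannot cross any door on Left's turn, and dually for Right, so the stuck-piece-removal moves of $\delta$-{\sc beyond the door} are in bijection with the $0$'s inserted by $\delta$; (5) conclude that the game tree of $f'(X)$ in $\delta$-{\sc beyond the door} equals $\delta(D') = D$, and that every such position is dicotic (each non-terminal position offers a move to each player: if a player's piece is stuck, removal is available); (6) invoke Corollary~\ref{col1} in its dicotic form, i.e. that a ruleset all of whose positions are dicotic and which realises the game tree of every $X$ in a universal partizan ruleset under the map $X \mapsto \delta(\cdot)$ realises every dicotic value, to conclude $\delta$-{\sc beyond the door} is a universal partisan dicotic ruleset. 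I expect step~(4) — checking that the terminal rooms of $f'(X)$ are stuck for exactly the correct player, and in particular that no room is simultaneously a dead end for both players in a way that would insert a spurious $0$ — to be the delicate one, and it is plausible the author instead introduces a dedicated gadget (a door-pair through which neither player can pass, forcing removal) appended to each terminal room of $f'(X)$ to make the correspondence exact; Corollary~\ref{cor:up} would then be used to argue such gadgets, being dicotic, do not change which values are reachable.
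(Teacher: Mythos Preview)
Your approach --- invert $\delta$ and then lift a game-tree-preserving reduction --- is genuinely different from the paper's, and as written it has a real gap in steps (1)--(2).

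First, ``replace each $0$ in $D$ by $\{\mid\}$'' is either vacuous (since $0=\{\mid\}$) or ill-defined; more to the point, $\delta$ is not surjective on game \emph{trees}: for every $D'$ the root of $\delta(D')$ has at least one Left and one Right option, so no $D'$ has $\delta(D')\cong 0$. At best you could aim for value equality $\delta(D')=D$, but you give no argument that such a $D'$ exists for an arbitrary dicotic $D$. Second, even granting such a $D'$, step (2) asserts a {\sc turning tiles} position $X$ with game tree \emph{isomorphic} to $D'$. Nothing in the paper establishes that: {\sc turning tiles} is shown universal for \emph{values}, and the reduction $f'$ is game-tree-preserving only in the direction {\sc turning tiles} $\to$ {\sc beyond the door}, which says nothing about which game trees are realised in {\sc turning tiles}. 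You cannot retreat to value equality $X=D'$ here either, precisely because of the phenomenon the paper flags just before the theorem: $A=B$ does not imply $\delta(A)=\delta(B)$. Your later steps (the gadget patches, the ``dicotic form'' of Corollary~\ref{col1}) do not repair this; they presuppose that the inversion in step (1) and the tree-level realisation in step (2) have already succeeded.

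The paper does not try to invert $\delta$ at all. It constructs, by induction on $G$, an explicit $\delta$-{\sc beyond the door} position $X$ and proves $o(X-G)=\mathcal{P}$ directly. The alternating-path Lemma and Corollary~\ref{cor:up} are used not as local ``stoppers'' grafting a $0$, but as global punishments: the piece is routed through three connection points per option, and any deviation lets the opponent steer it onto a long alternating path worth $n\cdot\downarrow$ or $n\cdot\downarrow+*$ (or their negatives). For $n$ large enough this dominates any fixed dicotic value by Corollary~\ref{cor:up}, so the deviator loses. This is the dicotic analogue of the ``very large integer'' threats in the original {\sc turning tiles} universality proof, and it is the idea your outline is missing.
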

\begin{proof}
Note that from Corollary \ref{cor:up}, for any dicotic potision $G$, if $n$ is sufficiently large, then $n \cdot \downarrow < G$ and $n \cdot \downarrow + * < G$. 

We prove this theorem by induction.
We use similar way to the proof of Theorem 1 of \cite{Sue}.

Assume that each dicotic option $G^L_1,  \ldots, G^L_n, G^R_1,  \ldots, G^R_m$ has already constructed in the same way.
We show a dicotic position $G \cong \{ G^L_1, \ldots, G^L_n \mid G^R_1, \ldots, G^R_m  \}$ can be constructed as Figures \ref{fig:dprove1}, \ref{fig:dprove2} and \ref{fig:dprove3}.
The entire position is very large, so we split the diagrams to primarily show how $G^L_1$ is connected. The other Left options are connected in a similar way and the part of Right options are constructed in a symmetric way. Note that the options are rotated and reflected.
The first, second, and third connection points play the same role as they do in Theorem 1 of \cite{Sue}. 
We also assume that the parts represented by dots are sufficiently long. 

\begin{figure}[htb]
    \centering
    \includegraphics[width = 12cm]{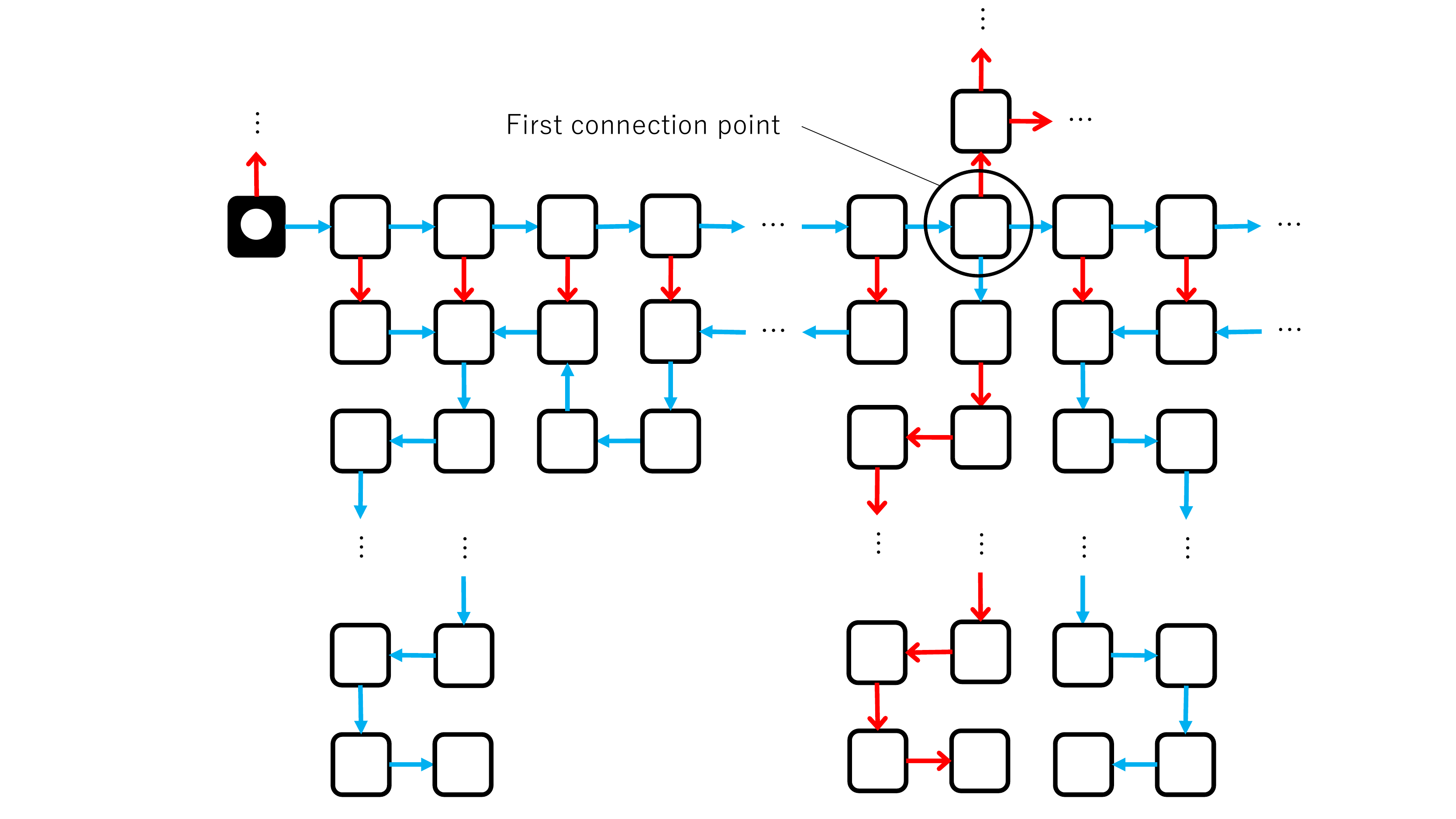}
    \caption{The section between the piece and first connection point of $G^L_1$}
    \label{fig:dprove1}
\end{figure}

\begin{figure}[htb]
    \centering
    \includegraphics[width = 12cm]{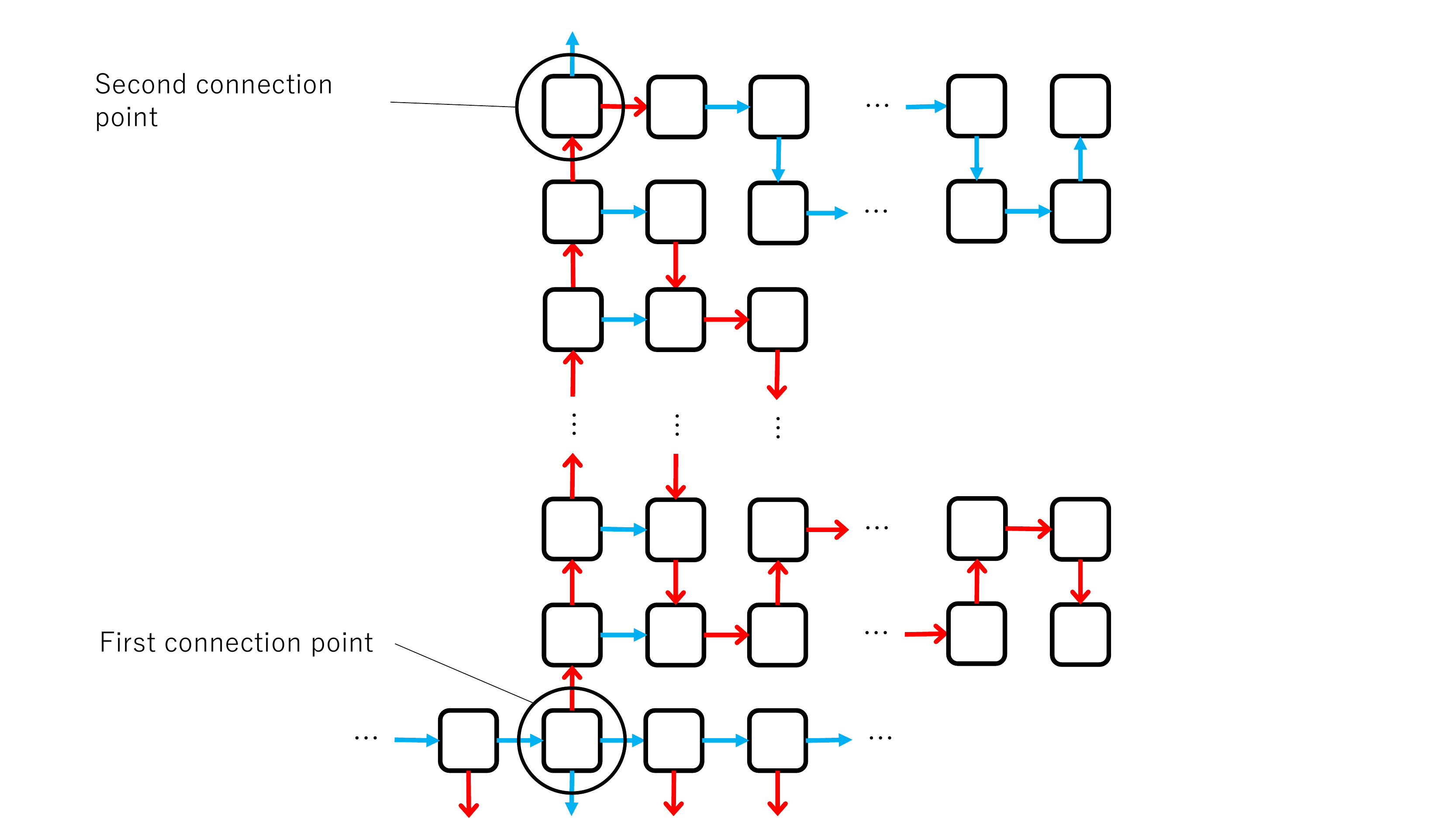}
    \caption{The section between first connection point and second connection point}
    \label{fig:dprove2}
\end{figure}

\begin{figure}[htb]
    \centering
    \includegraphics[width = 12cm]{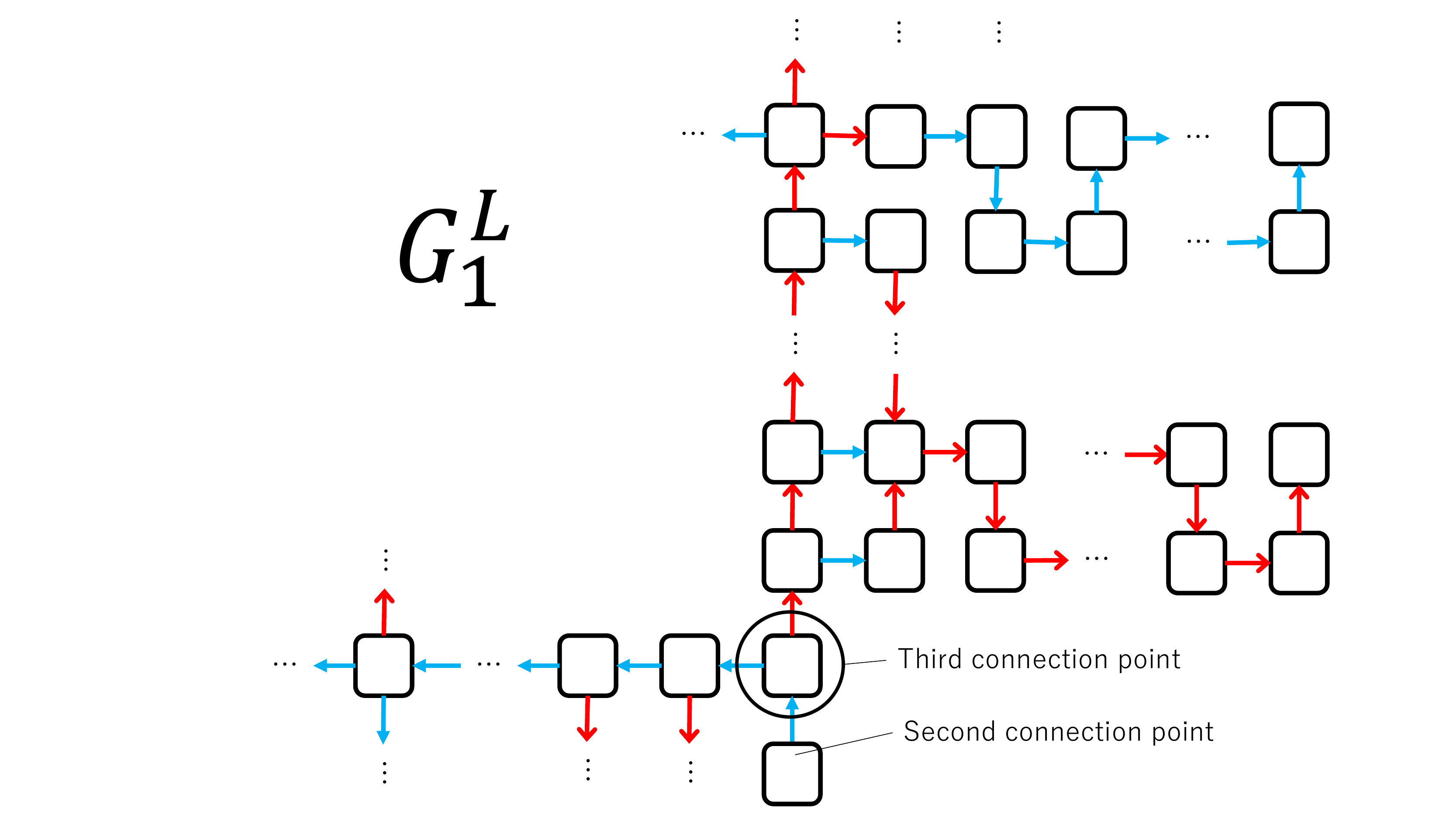}
    \caption{$G^L_1$ connected to third connection point}
    \label{fig:dprove3}
\end{figure}

Let this position be $X.$ We prove that $o(X - G) = \mathcal{P}.$

Suppose that Left moves first. Consider the case Left moves a piece on $X$. Except for the case to move the piece one of the first connection points, Right can win because he can move the piece to the node below.
Then, the value of $X$ changes to $X' = n \cdot \downarrow$ or $X' = n \cdot \downarrow + *$, where $n$ is enough large and satisfies $X' < G$. Then, $o(X' - G) = \mathcal{R}$.

Thus, without loss of generality, we assume that Left moves the piece to the first connection point of $G^L_1$. 

Immediately after the move, Right moves the piece to second connection point of $G^L_1$ and force Left to move third connection point of $G^L_1$ (because otherwise Right can win by moving the piece and get $n' \cdot \downarrow$ or $n' \cdot \downarrow + *$, where $n'$ is enough large), and if Left moves so, then the whole position changes to $G^L_1 - G.$ Therefore, Right can move to $G^L_1 -G^L_1 = 0$ and he wins.
If Right moves first and he moves the piece then Left wins by symmetric discussion.

Consider the case that Right moves first and he changes $X - G$ to $X - G^L_i.$
Without loss of generality, we assume that $i = 1.$  Then Left moves the piece to first connection point of $G^L_1.$ Right has to move the piece to second connection point of $G^L_1$ because otherwise Left can win by moving the piece and get $n''\cdot \uparrow$ or $n'' \cdot \uparrow + *$ for enough large $n''$. However, even if Right moves the piece to second connection point of $G^L_1,$ Left can move the piece to third connection point of $G^L_1$ and then the whole position becomes $G^L_1 - G^L_1 = 0$ and Left wins. If Left moves first and she changes $X-G$ to $X - G^R_i$, then Right wins by symmetric discussion.

Therefore, for $X-G,$ the previous player has a winning strategy. Thus, $X = G.$
\end{proof}

\section{Conclusion}
\label{secCon}
In this paper, we proved {\sc go on lattice} and {\sc beyond the door} are universal partisan rulesets by using game-tree preserving reduction.
The method of reduction has been used primarily for proving computational complexity of problems.
Since this study shows that reduction is also effective in the proof of universality of a game, we can expect that the knowledge accumulated in the study of computational complexity will be utilized in the study of combinatorial game theory.
Further, we also solved an open question on combinatorial game theory by showing that $\delta$-{\sc beyond the door} is a universal partisan dicotic ruleset. 

On the other hand, it is still open questions that whether $\delta$-{\sc blue-red/blue-red-green turning tiles} and $\delta$-{\sc go on lattice} are universal partisan dicotic rulesets or not.

\end{document}